\theoremstyle{plain}
\newtheorem{thm}{Theorem}[section]
\newtheorem*{mt*}{Theorem (Huneke-Lyubeznik, [3], Theorem 2.9)}
\newtheorem*{cj*}{Conjecture}
\newtheorem*{nt*}{Notations}
\newtheorem{lemma}[thm]{Lemma}
\newtheorem{cor}{Corollary}
\newtheorem{rem}{Remark}
\theoremstyle{definition}
\newtheorem*{definition}{Definition }
\newcommand{\ideal}[1]{\mathfrak{#1}}
\newcommand{\m}{\ideal{m}}
\newcommand{\n}{\ideal{n}}
\newcommand{\p}{\ideal{p}}
\newcommand{\q}{\ideal{q}}
\newcommand{\func}[1]{\mathrm{#1} \,}
\newcommand{\Hom}{\func{Hom}}
\newcommand{\Ass}{\func{Ass}}
\newcommand{\rad}{\func{rad}}
\title[]{Eisenstein extension, connectedness and the second vanishing theorem}
\author[]{Rajsekhar Bhattacharyya}
\address{Dinabandhu Andrews College, Garia, Kolkata 700084, India}
\email{rbhattacharyya@gmail.com}
\thanks{}
\keywords{Local Cohomology}
\subjclass[2010]{13D45}
\begin{document}

\begin{abstract}
In this paper, at first, we show that for a ramified regular local ring $S$, which is an Eisenstein extension of an unramified regular local ring $R$, when an ideal $I$ of $S$ is extended from an ideal $J$ of $R$, the punctured spectrum of $R/J$ is connnected if that of $S/JS$ is connected. Using this, we extend the result of SVT to complete ramified regular local ring only for the extended ideals. If the punctured spectrum of $S/JS$ is disconnected then that of $R/J$ is also disconnected when every minimal primes $\p$ of $J$, $R/\p$ is normal. Under this situation we prove that both of them have the same number of connected components. Finally, we show that for both unramified and ramified regular local rings (for extended ideal via Eisenstein extension), two top-most local cohomology modules satisfy the Conjecture 1 of \cite{L-Y}, although the conjecture is false in general.  
\end{abstract}

\maketitle
\section{introduction}

Recall that the cohomological dimension of an ideal $J$ of a Noetherian ring $R$ is the maximum index $i \geq0$ for which the local cohomology module $H^i_J(R)$ is nonzero. The cohomological dimension of the maximal ideal of a local ring coincides with the ring's dimension \cite{HartshorneLC}.  

In this context we mention Hartshorne-Lichtenbaum vanishing theorem or ``HLVT'' \cite{HartshorneCD}. It states that: For any complete local domain $R$ of dimension $d$, $H^d_J(R)$ vanishes if and only if $\dim(R/J)>0$. One may regard the HLVT as the ``first vanishing theorem'' for local cohomology.

If the ring $R$ contains a field, the ``second vanishing theorem'' or ``SVT'' of local cohomology states the following: Let $R$ be a complete regular local ring of dimension $d$ with a separably closed residue field, which it contains. Let $J\subseteq R$ be an ideal such that $\dim(R/J)\geq 2$. Then $H^{d-1}_J(R)=0$  if and only if the punctured spectrum of $R/J$ is connected \cite{HartshorneCD,H-L,Ogus,P-S}. 

If regular local ring does not contain fields, then, recently in \cite{Zh} (see Theorem 1.6), a version of SVT is proved:

Let $(R, \m)$ be a $d$-dimensional complete unramified regular local ring of mixed characteristic, whose residue field is separably closed. Let $I$ be an ideal of $R$, then $H^j_{I}(R)=0$ for all $j\geq d-1$ if and only if $\dim(R/J)\geq 2$ and the punctured spectrum of $R/J$ is connected. 

Previously, in \cite{CohDim} (see Theorem 3.8), a slightly different version the SVT has been proved for unramified regular local ring of mixed characteristic which follows as special case from the above defined version: 

Let $(R, \m)$ be a $d$-dimensional complete unramified regular local ring of mixed characteristic, whose residue field is separably closed. Let $J$ be an ideal of $R$ for which $\dim(R/\p) \geq 3$ for every minimal prime $\p$ of $J$. Then $H^{d-1}_J(R)=0$ if and only if the punctured spectrum of $R/J$ is connected. It is to be noted that this version uses only the ideals $J\subset R$ such that $R/J$ is equidimensional.

In this paper, at first, we show that for a ramified regular local ring $S$, which is an Eisenstein extension of an unramified regular local ring $R$, when an ideal $I$ of $S$ is extended from an ideal $J$ of $R$, the punctured spectrum of $R/J$ is connnected if that of $S/JS$ is connected. Using this, we extend the result of the SVT to complete ramified regular local rings only for the extended ideals (see Theorem 2.2): 

Let $(S, \n)$ be a $d$-dimensional complete ramified regular local ring of mixed characteristic, with a separably closed residue field and $I$ be an ideal of it. Assume $I= JS$ is an extension of an ideal $J$ of any unramified complete regular local ring $R$ such that $S$ is an Eisenstein extension of $R$. In this paper, at first, we show that for such a ramified regular local ring $S$, the punctured spectrum of $R/J$ is connnected if that of $S/JS$ is connected. Using this, we extend the result of the SVT to complete ramified regular local rings only for the extended ideals (see Theorem 2.2): Let $\dim(S/I) \geq 2$, then $H^{d-1}_I(S)=0$ if and only if the punctured spectrum of $S/I$ is connected. Moreover, if the punctured spectrum of $S/JS$ is disconnected then that of $R/J$ is also disconnected when every minimal primes $\p$ of $J$, $R/\p$ is normal and both of them have the same number of connected components (see Theorem 3.1).

In \cite{L-Y}, in Conjecture 1 it is claimed that for Noeherian regular local ring $R$ containing an ideal $I$, if $H^{i}_{I}(R)\neq 0$ then $0\in\Ass (D_R (H^{i}_{I}(R)))$, where $D_R()$ is Matlis dula functor. This conjecture is proved to be false in mixed characteristics \cite{DSZ}, in fact they show that for power series ring of dimension six over DVR 4th local cohomology violates the conjecture. But, from the results of Theorem 3.1 we show that for both unramified and ramified regular local rings (for extended ideal via Eisenstein extension), two top-most local cohomology modules satisfy the conjecture (see Corollary 1). Moreover, they also provide examples of local cohomology modules whose support of matis dual and spectrum of the rings coincide, which is stated in Corollary 1.2 of \cite{L-Y}.

\section{the main result}

Let $(R;m)$ be a local ring of mixed characteristic $p > 0$. We say $R$ is unramified if $p\notin {\m}^2$ and it is ramified if $p\in {\m}^2$. For normal local ring $(R,\m)$, consider the extension ring defined by $S=R[X]/f(X)$ where $f(X)=X^n+ a_1X^{n-1}+\ldots +a_n$ with $a_i\in \m$ for every $i=1,\ldots,n$ and $a_n\notin {\m}^2$. This ring $(S,\n)$ is local and it is defined as an Eisenstein extension of $R$ and $f(X)$ is known as an Eisenstein polynomial (see page 228-229 of \cite{Matsumura}). 

Here we note down the following important results regarding Eisenstein extensions:

(1) An Eisenstein extension of regular local ring is regular local (see Theorem 29.8 (i) of \cite{Matsumura}) and in this context, we observe that an Eisenstein extensions of a regular local ring is again a regular local ring.

(2) Every ramified regular local ring is an Eisenstein extension of some unramified regular local ring (see Theorem 29.8 (ii) of \cite{Matsumura}. 

(3) The Eisenstein extension mentioned in (1) and (2) are faithfully flat (apply Theorem 23.1 of \cite{Matsumura}) and it is also a finite extension (see Lemma 1 in p-228 of \cite{Matsumura}).

Before presenting the main result, we recollect the defintion of Huneke-Lyubeznik graph.

\begin{definition}
For a local ring $(R,\m)$, let $\p_1,\ldots,\p_t$ be the minimal primes. Then the graph $\Theta_R$ is defined for $t$ vertices labeled by $1,\ldots t$, such that there will be an edge between two different vertices $i$ and $j$, if $\p_i + \p_j$ is not $\m$-primary.
\end{definition}

\begin{mt*}
Given a complete local ring $R$, the graph $\Theta_R$ is
connected if and only if the punctured spectrum of $R$ is connected.
\end{mt*}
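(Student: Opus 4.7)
My plan is to realize the punctured spectrum $U := \Spec(R) \setminus \{\m\}$ as a finite union of closed connected pieces indexed by the minimal primes, and then reduce connectedness of $U$ to a combinatorial statement about the intersection graph of the cover, which I will recognize as $\Theta_R$. Concretely, I would write $U = U_1 \cup \cdots \cup U_t$ with $U_i := \V(\p_i) \setminus \{\m\}$, closed in $U$; these are exactly the traces on $U$ of the irreducible components $\V(\p_i)$ of $\Spec(R)$.

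The first step is to show that each $U_i$ is connected. If $\dim(R/\p_i) = 1$, then $\V(\p_i) = \{\p_i, \m\}$ and $U_i = \{\p_i\}$ is a single point. If $\dim(R/\p_i) \geq 2$, I would appeal to the classical theorem (due to Hartshorne, in the spirit of Grothendieck's formal connectedness) that the punctured spectrum of a complete local domain of dimension $\geq 2$ is connected, applied to the complete local domain $R/\p_i$ whose punctured spectrum is canonically $U_i$.

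The second step is the straightforward identification $U_i \cap U_j = \V(\p_i + \p_j) \setminus \{\m\}$, which is nonempty if and only if $\rad(\p_i + \p_j) \neq \m$, that is, $\p_i + \p_j$ is not $\m$-primary. Thus edges of $\Theta_R$ correspond exactly to pairs $(i,j)$ with $U_i \cap U_j \neq \emptyset$. I would finish by applying the standard topological lemma: a topological space that is a finite union of closed connected subsets is connected if and only if the intersection graph of the cover is connected (the nontrivial direction is proved by noting that each connected component of the union must be a union of some of the closed pieces, and that two pieces in the same component can be linked by a chain of successively intersecting pieces). Specializing this to $U = U_1 \cup \cdots \cup U_t$ yields the desired equivalence.

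The main obstacle is the first step, i.e., the classical input that a complete local domain of dimension $\geq 2$ has connected punctured spectrum. Irreducibility alone does not ensure that removing the closed point preserves connectedness; the completeness hypothesis is essential and is really what makes the theorem nontrivial. Once this input is granted, the remainder of the argument is a formal topological exercise.
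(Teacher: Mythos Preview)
The paper does not supply a proof of this statement at all: it is recorded as the theorem of Huneke and Lyubeznik and later invoked by citation (\cite{H-L}, Theorem~2.9), so there is nothing in the paper to compare your argument against line by line.

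That said, your outline is essentially the original Huneke--Lyubeznik argument. You correctly cover $U=\Spec(R)\setminus\{\m\}$ by the closed sets $U_i=\V(\p_i)\setminus\{\m\}$, correctly identify $U_i\cap U_j\neq\emptyset$ with the edge condition in $\Theta_R$, and correctly reduce the problem to the combinatorial lemma on intersection graphs of finite closed covers by connected pieces. The one substantive external input you flag---that the punctured spectrum of a complete local domain of dimension $\geq 2$ is connected---is exactly the ingredient Huneke and Lyubeznik use (it is Hartshorne's connectedness theorem, a consequence of Grothendieck's formal-function/Faltings-type connectedness results), and as you note it is genuinely where the completeness hypothesis is consumed. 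Minor housekeeping you may want to add when writing it up: the degenerate case $\dim(R/\p_i)=0$ (i.e.\ $\p_i=\m$, forcing $\dim R=0$ and $U=\emptyset$) should be disposed of separately, and in the ``only if'' direction it is worth remarking that if $\Theta_R$ splits into two nonempty vertex sets with no edge between them, then the corresponding unions of $U_i$'s give a disconnection of $U$ into nonempty closed pieces. None of this changes the strategy; your plan is correct and matches the standard proof the paper is citing.
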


Now, we observe the following Lemma.

\begin{lemma}
Let $(R, \m)$ be a $d$-dimensional complete unramified regular local ring of mixed characteristic. Let $(S, \n)$ be a ramified complete regular local ring obtained via Eisenstein extension of $R$ where $f(X)\in R[X]$ is the Eisenstein polynomial.\\
(1) If the residue field of $S$ is separably closed then so is the residue field of $R$\\
(2) If the image of an Eisenstein polynomial $f(X)\in R[X]$ is prime in the ring $(R/\p)[X]$, then $\p S$ is also prime in $S$.\\
(3) For ideal $J\subset R$, $\dim (S/JS)=\dim (R/J)$.\\
(4) For $I= JS$, if the punctured spectrum of $S/I$ is connected then that of $R/J$ is also connected. 
\end{lemma}

\begin{proof}
(1) In $S$, $\n= \m S +XS$. This gives $S/\n= (R[X]/f)/(\m+X)(R[X]/f)= (R[X]/\m R[X])/(X)(R[X]/\m R[X])= R/\m$, since $f-X^n \in \m R[X]$. So the result follows.

(2) Observe that for $\q=\p S$, $S/\q = S/\p S= (R[X]/fR[X])/\p R[X](R[X]/fR[X])= (R/\p)[X]/f(R/\p)[X]$. From the assertion, $(R/\p)[X]/f(R/\p)[X]$ is a domain, hence $\q\subset S$ is a prime. 

(3) Due to Eisenstein extension, $S$ is integral over $R$, hence $S/JS$ is integral over $R/J$ and they have same dimensions.

(4) Let $\q\subset S$ be the minimal prime of $I= JS$. Now $\p= \q\cap R$ is a minimal prime ideal of $J$, otherwise if $J\subset \p'\subset \p$, then by going down theorem there exists $\q'\subset S$ such that $JS\subset \q'\subset \q$. Conversely, for any minimal prime $\p$ of $J$, we claim that any $\q$ that lying over $p$ is a minimal prime of $JS$ as well as of $\p S$. Since $S$ is integral over $R$, such $\q$ always exists. To see this, suppose there exists some $JS\subset \q'\subset \q$. This gives $J\subset \q'\cap R\subset \p$. But, $\p$ is minimal over $R$ hence by lying over theorem, $\q= \q'$. So, for $t'\geq t$, if $\p_1,\ldots,\p_t$ are the minimal primes of $J$ and $\q_1,\ldots,\q_t'$ are the minimal primes of $JS$ then $\q_i\cap R= \p_i$ and $\q_i$ is minimal prime over $\p_i S$ as well for every $i=1,\ldots,t$. 

Now we assert that the punctured spectrum of $R/J$ is connected or equivalently the graph $\Theta_{R/J}$ of $\p_1,\ldots, \p_t$ is connected \cite{H-L}, Theorem 2.9. From the hypothesis, applying \cite{H-L}, Theorem 2.9 once again we get that, the puntured spectrum of $S/I$ is connected or equivalently the graph $\Theta_{S/I}$ of $\q_1,\ldots, \q_t'$ is connected, i.e for any pair of $\q_i, q_j$, $\q_i + \q_j$ is not $\n$-primary. So we would like to show for any pair of $\p_i, \p_j$, $\p_i + \p_j$ is not $\m$-primary. To see this, assume for some pair $\rad (\p_i + \p_j)= \m$. Since $S$ is an Eisenstein extension of $R$, we have $S/\m S= (R/\m)[X]/(X^n)$. This gives $\dim (S/\m S)= \dim ((R/\m)[X]/(X^n))= 0$. Thus $\rad (\m S)= \n$. Now $\rad (\q_i +\q_j)\supset \rad (\p_i S+ \p_j S)\supset \rad ((\p_i +\p_j)S)\supset (\rad (\p_i +\p_j))S\supset\m S$. This gives $\rad (\q_i +\q_j)\supset \rad (\m S)= \n$. This gives that the graph $\Theta_{S/I}$ of $\q_1,\ldots, \q_t'$ is not connected. 

\end{proof}

Now we prove the SVT over complete ramified regular local rings of mixed characteristic, only for the extended ideals.  

\begin{thm}
Let $(S, \n)$ be a $d$-dimensional complete ramified regular local ring of mixed characteristic, with a separably closed residue field and $I$ be an ideal of it. Assume $I= JS$ is an extension of an ideal $J$ of any unramified complete regular local ring $R$ such that $S$ is an Eisenstein extension of $R$. If $\dim(S/I) \geq 2$, then $H^{d-1}_I(S)=0$ if and only if the punctured spectrum of $S/I$ is connected. 
\end{thm}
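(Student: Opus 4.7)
The strategy is to reduce to the unramified SVT of \cite{CohDim} via the faithfully flat extension $R \hookrightarrow S$, and then transfer connectedness of the punctured spectrum across this extension. Since $S = R[X]/(f)$ with $f$ monic, $S$ is module-finite over $R$, both rings have dimension $d$, and local cohomology commutes with flat base change, yielding $H^{d-1}_I(S) = H^{d-1}_{JS}(S) \cong H^{d-1}_J(R) \otimes_R S$; by faithful flatness, this vanishes if and only if $H^{d-1}_J(R) = 0$.

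Next I verify the hypotheses of Theorem~3.8 of \cite{CohDim} for $(R, J)$: $R$ is a complete unramified regular local ring of mixed characteristic by construction; its residue field is separably closed by Lemma~2.1(1); and for each minimal prime $\p$ of $J$, by going-down I pick a minimal prime $\q$ of $I$ with $\q \cap R = \p$ and apply Lemma~2.1(2) to obtain $\dim(R/\p) \geq 3$. The unramified SVT then gives $H^{d-1}_J(R) = 0$ if and only if the punctured spectrum of $R/J$ is connected.

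The remaining task is the equivalence of connectedness of the punctured spectra of $R/J$ and $S/I$. By the Huneke-Lyubeznik theorem this amounts to comparing the graphs $\Theta_{R/J}$ and $\Theta_{S/I}$. The vertex map $\q \mapsto \q \cap R$ sending minimal primes of $I$ to minimal primes of $J$ is surjective by going-down (and in fact every minimal prime of $\p S$ in $S$ is a minimal prime of $I$, since the $\p_i$ are pairwise incomparable). Using that $R \to S$ is module-finite and faithfully flat, an ideal $K$ of $R$ is $\m$-primary if and only if $KS$ is $\n$-primary (both conditions saying $S/KS$ has finite length). Applied to $K = \p_i + \p_j$, this shows both that $\Theta_{S/I}$ connected implies $\Theta_{R/J}$ connected, and that an edge between $\p_i$ and $\p_j$ in $\Theta_{R/J}$ lifts to an edge in $\Theta_{S/I}$ between suitable minimal primes above each endpoint (chosen inside a common prime $\q^* \subsetneq \n$ with $\q^* \supseteq \p_i S + \p_j S$).

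The main obstacle is the reverse direction: $\Theta_{R/J}$ connected implies $\Theta_{S/I}$ connected. This reduces to showing that the ``fiber'' of the vertex map above each $\p$---the set of minimal primes of $I$ lying over $\p$---belongs to a single connected component of $\Theta_{S/I}$. This is nontrivial precisely when the hypothesis of Lemma~2.1(3) fails and $\bar f$ factors nontrivially in $(R/\p)[X]$. For this I plan to analyse $S/\p S = (R/\p)[X]/(\bar f)$, a finite free extension of the complete local domain $R/\p$ whose punctured spectrum is connected (since $\dim R/\p \geq 3$), and then transfer this connectedness---via a graph-theoretic argument inside $S/\p S$ followed by the Huneke-Lyubeznik correspondence---to conclude that any two minimal primes of $I$ lying over $\p$ are joined by a path in $\Theta_{S/I}$, which combined with the lifting of edges established above completes the proof.
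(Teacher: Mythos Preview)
Your reduction of the implication ``punctured spectrum of $S/I$ connected $\Rightarrow H^{d-1}_I(S)=0$'' to the unramified SVT matches the paper exactly: you verify the residue field, the dimension bound on minimal primes of $J$ via Lemma~2.1, pass connectedness from $\Theta_{S/I}$ down to $\Theta_{R/J}$, apply \cite{CohDim}, and tensor back up by flatness.

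The divergence is in the other implication. You try to prove ``$H^{d-1}_I(S)=0 \Rightarrow$ punctured spectrum of $S/I$ connected'' by going through $R$ again, which forces you to establish the \emph{upward} transfer of connectedness ($\Theta_{R/J}$ connected $\Rightarrow \Theta_{S/I}$ connected). You correctly isolate this as the main obstacle, and your plan hinges on showing that the punctured spectrum of $S/\p S=(R/\p)[X]/(\bar f)$ is connected for each minimal prime $\p$ of $J$. But this is not automatic: $R/\p$ is only a complete local domain of dimension $\geq 3$, with no control on its depth, so Hartshorne-type connectedness for the finite free extension $S/\p S$ is not available in general. Indeed, the paper's later Theorem~3.1, which \emph{does} prove the two-way equivalence of connectedness, adds precisely the extra hypothesis that each $R/\p$ be normal (and then uses Lemma~2.1(3) to force $\p S$ prime, so the fibers are singletons). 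Without that hypothesis your fiber argument is at best incomplete.

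The paper sidesteps this entirely: for ``$H^{d-1}_I(S)=0 \Rightarrow$ connected'' it argues directly on $S$ via Mayer--Vietoris. If the punctured spectrum of $S/I$ were disconnected, choose $I_1,I_2$ with $\rad(I_1\cap I_2)=\rad I$, $\rad(I_1+I_2)=\n$, neither $\n$-primary; the sequence
\[
H^{d-1}_I(S)\to H^d_{I_1+I_2}(S)\to H^d_{I_1}(S)\oplus H^d_{I_2}(S)
\]
together with HLVT (which kills the right-hand terms) forces $H^d_\n(S)=0$, a contradiction. This step uses nothing about $R$ or the extension structure; it works for any complete local domain. So the ``main obstacle'' you identified is an artifact of routing both directions through $R$, and can be removed by handling this direction intrinsically on $S$.
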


\begin{proof}
We first assume $H^{d-1}_I(S)=0$. Suppose, the punctured spectrum of $S/I$ is disconnected.
Let $\n$ denote the maximal ideal of $S$, so that there exist ideals $I_1$ and $I_2$ of $S$ that are not $\n$-primary
for which $\rad (I_1\cap I_2)=\rad I$ and $\rad (I_1+I_2)=\n$. 
Consider the Mayer-Vietoris sequence,
\[
\cdots \to H^{d-1}_I(S)\to H^d_{I_1+I_2}(S)\to H^d_{I_1}(S)\oplus H^d_{I_2}(S)\to H^d_{I}(S)\to 0.
\]
Now $H^{d-1}_I(S)=0$, and $H^d_{I_1}(S)=H^n_{I_2}(S)=H^d_{I}(S)=0$
by the HLVT. Then $H^d_\n(S)=H^d_{I_1+I_2}(S)=0$, which contradics the HLVT. 

It should be mentioned that the part of the proof in the above paragraph, is same to that of \cite{CohDim}, Theorem 3.8, but for the sake of completeness we keep it here.

To prove the other direction, we can proceed as follows: From (3) of Lemma 2.1 we get that $\dim (R/J)\geq 2$ and from (4) of Lemma 2.1, we get punctured spectrum of $R/J$ is also connected. Moreover from (1) of above Lemma 2.1, we have that $R$ also has separably closed residue field. So, all the conditions on ramified ring $S$ and on the ideal $JS$ reduces to those on unramified ring $R$ and on the ideal $J$. So, by \cite{Zh}, Theorem 1.4, we get $H^{d-1}_{J}(R)= 0$. Since $S$ is flat over $R$, $H^{d-1}_{JS}(S)= H^{d-1}_{J}(R)\otimes S= 0$. This finishes the proof.
\end{proof}


\section{when punctured spectrum is not connected}

For Noetherian local ring $(R,\m)$, let $E_R$ be the $R$-injective hull of the residue field. Then for any $R$-module $M$, we set $D_R (M)= \Hom_R(M,E_R)$ as the Matlis dual of M. In the previous section we show that for an Eisenstein extension $S$ of $R$, when an ideal $I$ of a ramified regular local $S$ is extended from an ideal $J$ of an unramified regular local ring $R$, the punctured spectrum of $R/J$ is connnected if that of $S/JS$ is connected. In this section, in Theorem 3.1, we show that if punctured spectrum of $S/JS$ is disconnected then that of $R/J$ is also disconnected when every minimal primes $\p$ of $J$, $R/\p$ is normal. Under this situation we prove that both of them have the same number of connected components. 

From the results of Theorem 3.1 we show that for both unramified and ramified regular local rings (for extended ideal via Eisenstein extension), two top-most local cohomology modules satisfy the Conjecture 1 of \cite{L-Y} (see Corollary 1). Although, this conjecture is proved to be false in mixed characteristics \cite{DSZ}, in fact they show that for power series ring of dimension six over DVR, 4th local cohomology violates the conjecture.  

\begin{thm}  
Let $(S, \n)$ be a $d$-dimensional complete ramified regular local ring of mixed characteristic, with a separably closed residue field and $I$ be an ideal of it. Assume $I= JS$ is an extension of an ideal $J$ of any unramified complete regular local ring $(R, \m)$ such that $S$ is an Eisenstein extension of $R$ and for every minimal prime $\p$ of $J$, $R/\p$ is normal. \\
(a) Under above situation, the punctured spectrum of $R/J$ is not connnected if and only if that of $S/JS$ is not connected. Moreover, both of them have same number of connected components.\\
(b) If $\dim(S/JS) \geq 2$ and the punctured spectrum of $S/JS$ has $t$ connected components, then\\ 
(1) $D_R (H^{d-1}_J(R))= R^{\oplus t-1}$,\\
(2) $D_R (H^{d-1}_I(S))= S^{\oplus t-1}$,\\ 
(3) $D_R (H^{d-1}_J(R/fR))= D_{R/fR} (H^{d-1}_{J(R/fR)}(R/fR))$\\
= $D_R (H^{d-1}_J(S/XS))= D_{R/fR} (H^{d-1}_{J(S/XS)}(S/XS)) \subset R^{\oplus t-1}$.
\end{thm}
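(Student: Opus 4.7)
The plan is to upgrade the loose correspondence in the proof of Theorem 2.2 into an isomorphism of Huneke--Lyubeznik graphs $\Theta_{R/J}\cong\Theta_{S/JS}$. Normality of each $R/\p$ is what allows Lemma 2.1 (3) to apply: verifying that the image $\bar f$ stays irreducible (hence prime) in $(R/\p)[X]$ forces $\p S$ to be prime in $S$, so the map $\p_i\leftrightarrow\q_i:=\p_i S$ is a genuine bijection of minimal primes. For the edges, one direction is exactly the calculation in the proof of Theorem 2.2: $\rad(\p_i+\p_j)=\m$ forces $\rad(\q_i+\q_j)\supseteq\rad(\m S)=\n$, since $S/\m S\cong(R/\m)[X]/(X^n)$ is zero-dimensional. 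For the converse, faithful flatness of $R\hookrightarrow S$ gives $IS\cap R=I$ and $\rad(IS)\cap R=\rad(I)$, so $\rad(\q_i+\q_j)=\n$ contracts to $\rad(\p_i+\p_j)\supseteq\m$. The Huneke--Lyubeznik theorem then transfers the equality of component counts to the punctured spectra.

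\textbf{Strategy for part (b)(1).}
After part (a) the pair $((R,\m),J)$ satisfies every hypothesis of Corollary 1.2 of \cite{L-Y}: separably closed residue field by Lemma 2.1 (1), $\dim R/\p\ge 3$ for every minimal prime $\p$ of $J$ by Lemma 2.1 (2), and $t$ connected components of the punctured spectrum of $R/J$ by part (a). That corollary yields $D_R(H^{d-1}_J(R))\cong R^{\oplus t-1}$.

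\textbf{Strategy for part (b)(2).}
Flatness of $S/R$ and the base-change formula give $H^{d-1}_I(S)\cong H^{d-1}_J(R)\otimes_R S$. Tensor-hom adjunction then reads
\[
D_R(H^{d-1}_I(S))\;\cong\;\Hom_R\bigl(H^{d-1}_J(R),\,\Hom_R(S,E_R)\bigr).
\]
Since $S=R[X]/f$ is $R$-free of rank $n$, $\Hom_R(S,E_R)\cong E_R^{\oplus n}$, so combining with (b)(1) yields $D_R(H^{d-1}_I(S))\cong R^{\oplus n(t-1)}$. This embeds into $R[[X]]^{\oplus t-1}$ by the coordinate-wise polynomial inclusion $R^{\oplus n}\hookrightarrow R[[X]]$, $(r_0,\ldots,r_{n-1})\mapsto\sum r_i X^i$.

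\textbf{Strategy for part (b)(3) and the main obstacle.}
The pivotal identification is $S/XS=R[X]/(f,X)=R/a_n R$, under which $R/fR$ and $S/XS$ describe the same ring and the four Matlis duals in (3) collapse to a single $R/a_n R$-module. The interchange $D_R\leftrightarrow D_{R/fR}$ on modules annihilated by $X$ is the standard change-of-rings identity, as $E_{R/fR}$ is the $f$-torsion of $E_R$. The inclusion into $R^{\oplus t-1}$ then comes from the long exact sequence of $0\to R\xrightarrow{a_n}R\to R/a_n R\to 0$: since $H^d_J(R)=0$ by HLVT (using $\dim R/J\ge 3$), one has $H^{d-1}_J(R/a_n R)\cong H^{d-1}_J(R)/a_n H^{d-1}_J(R)$, whose Matlis dual sits as the $a_n$-torsion submodule of $D_R(H^{d-1}_J(R))\cong R^{\oplus t-1}$ from (b)(1). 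The main obstacle will be the clean identification ``$R/fR=S/XS$'': it requires interpreting $f$ as a distinguished element of $R[[X]]$ and invoking Weierstrass division to identify $R[X]/f$ with $R[[X]]/f\cong S$, whereupon reducing modulo $X$ on either side yields the common regular local ring $R/a_n R$, after which the rest of (3) is routine bookkeeping.
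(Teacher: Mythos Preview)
Your strategy for part (a) has a genuine gap. You correctly identify Lemma 2.1 (3) as the mechanism for turning $\p_i\mapsto\p_i S$ into a bijection of minimal primes, but you assume that normality of $R/\p$ by itself forces the image $\bar f\in(R/\p)[X]$ to be irreducible. It does not. Eisenstein's criterion over a normal domain (Matsumura, p.~228, Lemma 1) requires that the reduced coefficients still satisfy the Eisenstein condition at $\m/\p$, in particular $\bar a_n\notin(\m/\p)^2$, i.e.\ $a_n\notin\m^2+\p$; and if $a_n\in\p$ then $\bar f$ is divisible by $X$ and visibly reducible. The paper supplies exactly the step you omit: it applies prime avoidance to the family $\{\p_1,\ldots,\p_t,\ \p_1+\cdots+\p_t+\m^2\}$ and \emph{chooses} the coefficients $a_1,\ldots,a_n\in\m$ to avoid all of these ideals, so that $\bar f$ remains an Eisenstein polynomial modulo every $\p_i$. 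Only then do normality and Lemma 2.1 (3) combine to give $\q_i=\p_i S$ prime. The edge arguments in both directions are as you describe and match the paper.

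For part (b) your arguments are correct but take a different route. In (b)(1) the paper cites \cite{CohDim}, Proposition 3.12 (not \cite{L-Y}, Corollary 1.2, which is about supports) to obtain $H^{d-1}_J(R)\cong E_R^{\oplus t-1}$ and then dualizes; your citation is off but the mathematics agrees. In (b)(2) the paper does not use tensor--hom adjunction: it computes $H^{d-1}_{JR[X]}(R[X])\cong E_R[X]^{\oplus t-1}$, whence $D_R$ of this is $R[[X]]^{\oplus t-1}$, and then gets the inclusion $D_R(H^{d-1}_I(S))\hookrightarrow D_R(H^{d-1}_{JR[X]}(R[X]))$ by dualizing the long exact sequence of $0\to R[X]\xrightarrow{f}R[X]\to S\to 0$ and invoking HLVT in degree $d$. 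Your adjunction route is shorter and in fact yields the sharper isomorphism $D_R(H^{d-1}_I(S))\cong R^{\oplus n(t-1)}$. In (b)(3) the paper embeds everything into a single $3\times 3$ commutative diagram of short exact sequences over $R[X]$ and reads off the inclusion from its dualized long exact columns, whereas you extract only the one sequence $0\to R\xrightarrow{a_n}R\to R/a_nR\to 0$; the content is the same, and your worry about Weierstrass division is misplaced, since $S/XS=R[X]/(f,X)=R/a_nR$ is an elementary reduction.
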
  

\begin{proof}
(a) For ideal $J\subset R$, let $\p_1,\ldots,\p_t$ be the minimal primes of $J$. Consider the collection of ideals $\{ \p_1,\ldots,\p_t,\p_1+\ldots+\p_t+\m^2 \}$ and using prime avoidance(since we can choose at the most two non prime ideals in the collection to avoid) we can choose $a_1,\ldots, a_n$ which are not in any $\p_1,\ldots,\p_t$ along with $\p_1+\ldots+\p_t+\m^2$. Now image of any $a_i$ in $R/\p_j$is non zero and is in $\m/\p_j$ and moreover $a_n$ (infact every $a_i$ for $i=1$ to $n$) are also not in $\m^2(R/\p_j)$. 

We choose an Eisenstein polynomial $f(X)=X^n+ a_1X^{n-1}+\ldots +a_n$ and $S$ is obtained via Eisenstein extension of $R$, i.e. $S=R[X]/f(X)$. From hypothesis and using Lemma 1 in page 228 of \cite{Matsumura}, the image of $f(x)$ is prime in every $(R/\p_i)[X]$ for all $i= 1,\ldots,t$. Moreover, from (3) of Lemma 2.1, we get that $\p_i S$ is a prime in $S$ for all $i= 1,\ldots,t$. 

Since $\p_i S$'s are primes and from the proof of (4) of Lemma 2.1 We find that they are the minimal primes of $JS$. We set $\q_i= \p_i S$ for every $i=1,\ldots,t$. 
Thus there is a one to one correspondence between the minimal primes over $J$ and those over $JS$, and if $\p_1,\ldots,\p_t$ be the minimal primes of $J$ and $\q_1,\ldots,\q_t$ be the minimal primes of $JS$ then $\q_i= \p_i S$ for every $i=1,\ldots,t$. 

From (4) of Lemma 2.1, we know that if the punctured spectrum of $S/JS$ is connected then the punctured spectrum of $R/J$ is also connected. Now we assert the converse, i.e. if the punctured spectrum of $R/J$ is connected then so is the puntured spectrum of $S/JS$ or equivalently the graph $\Theta_{S/JS}$ of $\q_1,\ldots, \q_t$ is connected, i.e for any pair of $\q_i, \q_j$, $\q_i + \q_j$ is not $\n$-primary \cite{H-L}, Theorem 2.9. To see this, assume otherwise that for some pair $\q_i$ and $\q_j$, $\rad (\q_i + \q_j)= \n$. Since $\q_i= \p_i S$ and $\q_j= \p_j S$, we get $\m= \n \cap R= \rad (\p_i S + \p_j S)\cap R= (\rad (\p_i + \p_j)S)\cap R= \rad ((\p_i + \p_j)S\cap R)= \rad (\p_i + \p_j)$. This is a contradiction. Thus, we conclude that $S/JS$ is connected. Thus we find the number of connected components in the punctured spectrum of $R/J$ and that of $S/JS$ are same.

(b) From results of Lemma 2.1, (a) of this Theorem, Theorem 1.6 of \cite{Zh} and using proof of Proposition 3.12 of \cite{CohDim}, we get $H^{d-1}_{J}(R)= E_R^{\oplus (t-1)}$, where $E_R$ is the $R$-injective hull of its residue field. If we apply the Matlis duality functor $D_R (-)$ on $H^{d-1}_{J}(R)$, we get, $D_R (H^{d-1}_{J}(R))= R^{\oplus (t-1)}$. Thus we prove (1). 

For (2), we can argue in the following way: Since $S$ is finite and flat over local ring $R$, it is finite and free, thus $D_R (H^{d-1}_{I}(S))= D_R (H^{d-1}_{J}(R)\otimes S)= D_R (H^{d-1}_{J}(R))\otimes S= S^{\oplus (t-1)}$. 

Set $f(X)= f$. Now, $(X, f)$ as well as $(f,X)$ are two $R[X]$-regular sequences (for polynomials $g, h\in R[X]$, if $fh= Xg$ then $X|h$ and $f|g$, since R[X] is a UFD). This gives the following commutative diagram of short exact sequences of $R[X]$-modules whose rows and columns are exact. 

\[
\CD
@. 0@. 0@. 0@. @. \\
@. @VVV @VVV @VVV @.\\
0 @>>>R[X]@>f>>R[X]@>>>R[X]/fR[X]@>>>0 @.\\
@. @VX VV @VVX V @VVX V @. \\
0 @>>>R[X]@>f>>R[X]@>>>R[X]/fR[X]@>>>0 @.\\
@. @VVV @VVV @VVV @. \\
0@>>>R[X]/XR[X]@>f>>R[X]/XR[X]@>>>R[X]/(X,f)R[X]@>>>0 @.\\
@. @VVV @VVV @VVV @.\\
@. 0@. 0@. 0@. @. 
\endCD
\]

The above diagram gives long exact sequences of $R[X]$-modules in both horizontal and vertical direction. Application of the Matlis duality functor $D_R (-)$ gives the following diagram of long exact sequences of $R$-modules where all the rows and columns are exact. Here, it is to be noted that for an $R$-module $M$, $D_R(M[X])$ is also an $R[X]$-module, but $D_R (-)$ is not a functor in the category of $R[X]$-module or $R[[X]]$-module and for that reason neither $D_R (f)$ is a map multiplication by "$f$" nor $D_R (X)$ is a map multiplication by "$X$".

\[
\CD
@.@AAA @AAA @AAA @AAA\\
@. D_R (H^{i-1}_{J R[X]}(R[X])) @<D_R (f)<<D_R (H^{i-1}_{J R[X]}(R[X]))@<<<D_R (H^{i-1}_{JS}(S))@<<<D_R (H^i_{J R[X]}(R[X]))\\
@. @AD_R(X) AA@AD_R(X) AA @AD_R(X) AA @AD_R(X) AA \\
@. D_R (H^{i-1}_{J R[X]}(R[X])) @<D_R (f)<<D_R (H^{i-1}_{J R[X]}(R[X]))@<<<D_R (H^{i-1}_{JS}(S))@<<<D_R (H^i_{J R[X]}(R[X]))\\
@. @AAA @AAA @AAA @AAA @. \\
@. D_R (H^{i-1}_{J}(R)) @<D_R (f)<<D_R (H^{i-1}_{J}(R))@<<<D_R (H^{i-1}_{J}(\frac{R[X]}{(X,f)R[X]}))@<<<D_R (H^{i}_{J}(R)) @. @.\\
@. @AAA @AAA @AAA @AAA @. \\
@. D_R (H^{i}_{J R[X]}(R[X])) @<D_R (f)<<D_R (H^{i}_{J R[X]}(R[X]))@<<<D_R (H^{i}_{JS}(S))@<<<D_R (H^{i+1}_{J R[X]}(R[X])) @. @.\\
@. @AAA @AAA @AAA @AAA @. 
\endCD
\]

From the above result we observe that $\frac{R[X]}{(X,f)R[X]}= R/fR= S/XS$. Using the above diagram once again we get that $D_R (H^{d-1}_J(R/fR))= D_R (H^{d-1}_J(S/XS))\subset D_R (H^{d-1}_{J}(R))= R^{\oplus t-1}$. Now, for any $\phi\in D_R (H^{d-1}_J(R/fR)$, $f\phi= 0$ and also $\phi$ maps into $(0:_{E_R}f)\subset E_R$ and it is well known that $(0:_{E_R}f)= E_{R/fR}$. Thus $D_R (H^{d-1}_J(R/fR))= D_{R/fR} (H^{d-1}_{J(R/fR)}(R/fR))$ and similar is true for the ring $S/XS$. This finishes the proof of (3) and (4).

\end{proof}

\begin{rem}
As an example of primes $\p$ in $R$ such that $R/\p$ is normal as mentioned in the hypothesis of Theorem 3.1, consider $\p$ generated by part of regular system of parameters of $R$. Clearly $R/\p$ is regular local ring, hence normal.
\end{rem}

In the following corollary we show that Conjecture 1 of \cite{L-Y} is true in unramified and ramified situation for two top-most local cohomology modules. 

\begin{cor}
Let $S$ be a $d$-dimensional complete ramified regular local ring of mixed characteristic and it is an Eisenstein extension of an unramified regular local ring $R$ via Eisenstein extension. Let $J$ be an ideal of $R$. Assume $S$, $R$ and $J$ satisfy hypothesis of Theorem 3.1. Then for $J\subset R$ and for $JS\subset S$,\\
(a) Two top-most local cohomology modules for $R$ and $S$ satisfy Conjecture 1 of \cite{L-Y}.\\
(b) For each of them, the Support of the Matlis dual modules is the whole spectrum of the ring, which is stated in Corollary 1.2 of \cite{L-Y}\\
\end{cor}  

\begin{proof}
(a) We only need to show that $H^{d-1}_J(R)$ and $H^{d-1}_{JS}(S)$ satisfy the conjecture, since top nonzero local cohomology module is the injective hull of residue field. For $H^{d-1}_J(R)$, the proof follows immediately from (1) of (b) of Theorem 3.1. From (2) of (b) of Theorem 3.1, $D_R(H^{d-1}_{JS}(S))=S^{\oplus t-1}$, Now using adjointness of Hom-Tensor funtors and $Hom_R(S,E_R)= E_S$, for any $S$-module $M$, one can show that $D_R(M)\cong D_S(M)$ for $S$-module $M$ where it is viewed as an $R$-module via restriction. Thus $D_S (H^{d-1}_{JS}(S))= S^{\oplus (t-1)}$ and the result follows.

(b) Since $D_R(H^{d-1}_J(R))=R^{\oplus t-1}$ and $D_S(H^{d-1}_{JS}(S))=S^{\oplus t-1}$ the result follows immediately.
\end{proof}


\begin{rem}
In Theorem 3.1, we use $D_R (X)$, here we show that $D_R (X)$ is not a map multiplication by "$X$": Consider $R$-module $M$ and $R[X]$-module $N$ where $X$ is an indeterminate. We give an example of an $R$-linear map $\chi: M[X]\rightarrow N$ such that $D_R (X)(\chi)\neq X\chi$. To show this it is sufficient to construct a map $\chi$ such that $\chi(Xg(X))\neq X\chi(g(X))$ for some $g(X)=a_0 + a_1 X \ldots +a_n X^n \in M[X]$. Let $\psi:M\rightarrow N$ be an $R$-linear map and define $\chi(g(X))= (\frac{d}{dX}(\psi(a_0) + \psi(a_1) X \ldots +\psi(a_n) X^n)|_{X=0}$. Suppose for this $g(X)$, $\psi(a_0)\neq 0$ and $\psi(a_1)= 0$. Clearly $\chi$ is an $R$-linear map, but $(X\chi)(g(X))=X(\chi(g(X)))= X.0$ while $\chi(Xg(X))= (\frac{d}{dX}(\psi(a_0)X + \ldots +\psi(a_n) X^{n+1})|_{X=0}= \psi(a_0)\neq 0$. 

Similarly, choosing suitable choice of $g(X)$ one can prove that $\chi(fg(X))\neq f\chi(g(X))$ for some $g(X)$ and this proves $D_R (f)$ is not a map multiplication by "$f$". 
\end{rem}



\end{document}